\numberwithin{equation}{section}
\theoremstyle{plain}
\newtheorem{theorem}{Theorem}[section]
\newtheorem{lemma}[theorem]{Lemma}
\newtheorem{proposition}[theorem]{Proposition}
\newtheorem{conjecture}[theorem]{Conjecture}
\theoremstyle{definition}
\newtheorem{definition}[theorem]{Definition}
\theoremstyle{remark}
\newtheorem{remark}[theorem]{Remark}
\newtheorem{summary}[theorem]{Summary}
\newtheorem{case[theorem]}{Case}
\newcommand{\G}{G_{k+1,m}}
\begin{document}

\title{Finite point configurations in the plane, rigidity and Erd\H os problems} 

\author{A. Iosevich and J. Passant}

\date{May 10, 2018}

\email{iosevich@math.rochester.edu}
\email{jpassant@ur.rochester.edu}

\address{Department of Mathematics, University of Rochester, Rochester, NY 14627}

\thanks{This work was partially supported by the NSA Grant H98230-15-1-0319}

\begin{abstract} For a finite point set $E\subset \mathbb{R}^d$ and a connected graph $G$ on $k+1$ vertices, we define a $G$-framework to be a collection of $k + 1$ points in E such that the distance between a pair of points is specified if the corresponding vertices of $G$ are connected by an edge. We consider two frameworks the same if the specified edge-distances are the same.
We find tight bounds on such distinct-distance drawings for rigid graphs in the plane, deploying the celebrated result of Guth and Katz.
We introduce a congruence relation on the wider set of graphs, which behaves nicely in both the real-discrete and continuous settings. We provide a sharp bound on the number of such congruence classes.
We then make a conjecture that the tight bound on rigid graphs should apply to all graphs. This appears to be a hard problem even in the case of the non-rigid 2-chain. However we provide evidence to support the conjecture by demonstrating that if the Erd\H os pinned-distance conjecture holds in dimension $d$ then the result for all graphs in dimension $d$ follows.
\end{abstract} 

\maketitle


\section{Introduction}

\vskip.125in 

Given a set $E$ in $\mathbb{R}^d$, the distance set of $E$ is
$$\Delta_d(E) = \{|x - y| : x, y \in E\} \subseteq \mathbb{R}.$$
In \cite{E45} Erd\H os posed the question: What is the minimal number of distinct distances determined by a finite point set $E$ in $\mathbb{R}^d$?
This has been thoroughly studied in both the $d=2$ case where the cascade of improvements to Erd\H os original $|E|^\frac{1}{2}$ by authors including Moser \cite{M52}, Chung \cite{C84}, Chung-Szemer\' edi-Trotter \cite{CST92}, Sz\' ekely \cite{S97}, Solymosi-T\' oth \cite{ST01}, Tardos \cite{T03} and most recently the solution of the problem in two dimensions due to Guth-Katz \cite{GK15}. In higher dimensions a simple variant of Erd\H os original argument gives $|E|^\frac{1}{d}$ in dimension $d$. An improvement in three dimensions due Clarkson-Edelsbrunner-Gubias-Sharir-Welzl \cite{CEGSW90} proved that one obtains at least $|E|^{\frac{1}{2}}$ distances, the three dimentional bound was furthered by Aronov-Pach-Sharir-Tardos \cite{APST04} who also proved a small improvement over the $|E|^{\frac{1}{d}}$ bound in dimension $d$. This was then improved significantly by Solymosi-Vu (\cite{SV08}, see also \cite{SV04}) who proved one obtains at least $|E|^{\frac{2}{d} - \frac{2}{d(d+2)}}$ distances, a near optimal bound for large dimensions.

The study of distance sets may be viewed as the study of congruence classes of two-point configurations. If we consider a pair of points $x,y$ and another pair $x',y'$, then there exists a rigid motion $T$ such that $Tx=x', Ty=y'$ if and only if $|x-y|=|x'-y'|$. A similar question can be asked about configurations involving more points. in this paper we shall consider $(k+1)$-point configurations. Suppose that $k \leq d$ and let $x^1, x^2, \dots, x^{k+1}$ be linearly independent. Also assume that $y^1, y^2, \dots, y^{k+1}$ are linearly independent. Then the question of whether the two collections are congruent, i.e whether there exists a rigid motion $T$ such that $y^j=Tx^j$, $1 \leq j \leq k+1$ reduces to checking whether $|x^i-x^j|=|y^i-y^j|$ for all $1 \leq i<j \leq k+1$. 

\begin{center}
\begin{figure}[ht] 
\begin{tikzpicture}[scale=2]
\coordinate (A) at (0cm,0cm);
\coordinate (B) at (1.4cm,1cm);
\coordinate (C) at (2cm,0cm);
\coordinate (D) at (0.5cm,-0.7cm);
\draw[thick, line cap=round] (A) -- (B);
\draw[thick, line cap=round] (B) -- (C);
\draw[thick, line cap=round] (C) -- (D);
\draw[thick, line cap=round] (D) -- (A);
\draw[thick, line cap=round] (A) -- (C);
\draw[thick, dashed, line cap=round] (B) -- (D);
\end{tikzpicture}
\caption{$d=2, k=3$.}\label{figure1} 
\end{figure}
\end{center}

In the situation when $k>d$, significant new complications arise. As a simple example, consider Figure 1 above. The length of the dotted line is determined by the lengths of the solid lines, the natural dimension of the configuration space, in the sense that will be made precise, is $5$. In general, the following heuristic is extremely useful in understanding the situation. Each of the $k+1$ vectors has $d$ coordinates. The dimension of the Euclidean motion group in ${\Bbb R}^d$ is equal to $d$ plus the dimension of the orthogonal group. This yields 
$$ d(k+1)-d-{d \choose 2}=d(k+1)-{d+1 \choose 2}.$$ 

We now turn to precise definitions and statements of results. Given a finite set $E \subset {\Bbb R}^d$ of size $>k+1$, we consider $(k+1)$-tuples of vectors in $E$ where the first $(d+1)$ vectors are affinely independent. We shall refer to such $(k+1)$-tuples as non-singular. 

\vskip.125in 

We say that two non-singular $(k+1)$-tuples $x^1, x^2, \dots, x^{k+1}$ and $y^1, y^2, \dots, y^{j+1}$ are congruent if there there exists a rotation $\theta$ and a translation $\tau$ such that 
$$y^j=\theta x^j+ \tau.$$ Let $M_d(k)(\mathbb{R}^d)$ denote the set of the resulting equivalence classes. Let $M_d(k)(E)$ denote the set of resulting equivalence classes where the vectors are restricted to a finite point set $E$. 

\begin{theorem} \label{Thm: PlaneCongSharp}
Let $E$ be a finite point set in $\mathbb{R}^2$. Then
$$ |M_2(k)(E)| \gtrapprox |E|^k,$$ where here and throughout, $X \lessapprox Y$ with the controlling parameter $R$ means that given $\epsilon>0$ there exists $C_{\epsilon}>0$ such that $X \leq C_{\epsilon}R^{\epsilon}Y$. 

\vskip.125in 

\noindent Moreover, the lower bound is, in general, best possible. 
\end{theorem}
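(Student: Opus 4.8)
The plan is to establish the lower bound by induction on $k$, using the Guth--Katz theorem only for the base case $k=1$, and to obtain the matching upper bound on the integer grid. Throughout we assume $E$ is not contained in a line (otherwise $M_2(k)(E)=\emptyset$ for $k\ge 2$ and the inequality is vacuous). For $k=1$ a non-singular tuple is an ordered pair of distinct points, so $M_2(1)(E)$ is essentially the distance set $\Delta_2(E)$, and $|M_2(1)(E)|\gtrapprox|E|$ is exactly Guth--Katz. For $k\ge 2$, let $\pi\colon M_2(k)(E)\to M_2(k-1)(E)$ be the map that deletes the last point. It is well defined on congruence classes, since deletion commutes with rigid motions, and it is surjective, since any non-singular $k$-tuple in $E$ extends to a non-singular $(k+1)$-tuple in $E$. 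Hence $|M_2(k)(E)|=\sum_{c}|\pi^{-1}(c)|$, and it suffices to prove $|\pi^{-1}(c)|\gtrsim|E|$ for every $c\in M_2(k-1)(E)$.

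Fix a realization $(x^1,\dots,x^k)$ of $c$ inside $E$. If $k\ge 3$, then for every $q\in E$ the tuple $(x^1,\dots,x^k,q)$ is still non-singular because its first three entries are unchanged, and for $q\ne q'$ the two extensions are non-congruent because an orientation-preserving rigid motion fixing the two distinct points $x^1,x^2$ must be the identity; this gives $|\pi^{-1}(c)|\ge|E|-k$, so the induction closes and $|M_2(k)(E)|\gtrsim|E|\cdot|M_2(k-1)(E)|\gtrapprox|E|^k$. The one case in which this argument weakens is $k=2$: there $(x^1,x^2,q)$ is non-singular only when $q$ lies off the line $\overline{x^1x^2}$, so we only get $|\pi^{-1}(c)|\ge|E|-L$, where $L:=\max_{\ell}|E\cap\ell|$ is the maximal number of points of $E$ on a line. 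If $L\le|E|/2$ this is still $\gtrsim|E|$ and the induction proceeds exactly as above.

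It remains to deal with the concentrated case $L>|E|/2$. Choose a line $\ell_0$ with $|E\cap\ell_0|=L$ and a point $a\in E\setminus\ell_0$, and look at the $(k+1)$-tuples $(a,p_1,\dots,p_k)$ with $p_1,\dots,p_k\in E\cap\ell_0$ and $p_1\ne p_2$; these are non-singular. The map sending such a tuple to its congruence class is injective: if $g$ is an orientation-preserving rigid motion carrying $(a,p_1,\dots,p_k)$ to a tuple of the same form, then $g$ fixes $a$ and is therefore a rotation about $a$, while it also sends the two distinct collinear points $p_1,p_2$ into $\ell_0$ and so preserves $\ell_0$; since $a\notin\ell_0$, no nontrivial rotation about $a$ preserves $\ell_0$, so $g$ is the identity. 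This produces $L^{k}-L^{k-1}\gtrsim|E|^{k}$ distinct congruence classes, finishing the lower bound. For sharpness take $E=\{0,1,\dots,N-1\}^{2}$, with $|E|=N^{2}$: assigning to each class the tuple of difference vectors $x^{2}-x^{1},\dots,x^{k+1}-x^{1}$ of one of its realizations gives an injection of $M_2(k)(E)$ into $\bigl(\mathbb{Z}^{2}\cap[-(N-1),N-1]^{2}\bigr)^{k}$, because two classes with the same image have realizations differing by a translation; hence $|M_2(k)(E)|\le(2N-1)^{2k}\le C_{k}|E|^{k}$.

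The induction is elementary once Guth--Katz is used as a black box at $k=1$; the step that needs genuine care is $k=2$, where the ``extend by an arbitrary point of $E$'' move fails because the new point must avoid a line, which is also why the concentrated case $L>|E|/2$ requires the separate direct construction above and why the hypothesis that $E$ not lie on a line cannot be dropped. I want to stress that no pinned-distance input is needed past $k=1$: the rigidity of the first triangle of each tuple pins the ambient rigid motion, and that is the only structural fact the induction uses.
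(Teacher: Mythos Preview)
Your argument is correct and takes a genuinely different route from the paper's. The paper proceeds in one shot for all $k$: writing $|E|^{2(k+1)} \approx \bigl(\sum_{\mathcal S} \lambda(\mathcal S)\bigr)^2 \le |M_2(k)(E)|\sum_{\mathcal S}\lambda(\mathcal S)^2$ by Cauchy--Schwarz, where $\lambda(\mathcal S)$ is the size of the congruence class $\mathcal S$, and then observing that $\sum_{\mathcal S}\lambda(\mathcal S)^2=\sum_{\theta,\tau}|v_\theta(\tau)|^{k+1}\le |E|^{k-1}\sum_{\theta,\tau}|v_\theta(\tau)|^{2}$, to which the Guth--Katz \emph{energy} estimate $\sum_{\theta,\tau}|v_\theta(\tau)|^2\lesssim |E|^3\log|E|$ applies directly. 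You instead run an induction: Guth--Katz enters only at $k=1$, in the form of the distinct-distance lower bound $|\Delta(E)|\gtrapprox|E|$, and every subsequent step is elementary counting through the deletion map $\pi$. Your approach has the virtue of isolating all the depth in the base case and making transparent that the problem for $(k+1)$-tuples is no harder than for pairs; the paper's argument is shorter and uniform in $k$, but leans on the stronger quantitative input. You also supply the sharpness example explicitly, which the paper only asserts.

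One small point: the paper's congruence relation is through the full isometry group (see the $O(d)$ action in the moving-frames discussion and the $\theta\in O(2)$ in the quoted Guth--Katz theorem), whereas your injectivity claims invoke only orientation-preserving motions. For $k\ge 3$ this makes no difference, since any isometry fixing the non-collinear triple $x^1,x^2,x^3$ is already the identity. For $k=2$, and in your concentrated-case construction, the reflection across $\overline{x^1x^2}$ (respectively, across the perpendicular from $a$ to $\ell_0$) can identify two of your tuples; this costs at most a factor of $2$ in each fiber and does not affect the bound, but it is worth stating explicitly.
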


\vskip.125in 

We could state a higher dimensional version of Theorem \ref{Thm: PlaneCongSharp}, but it would not be sharp because our argument relies to a significant extent on the case $k=1$ where the needed bound is only known in two dimensions.

We now deal with point configuration where distances between some pairs of points are specified and others are not. An interesting and deceptively looking example is provided by the hinge. More precisely, it is reasonable to ask if $E$ is a finite subset of ${\Bbb R}^2$, whether 
\begin{equation} \label{hingequestion} |\{(|x-y|,|x-z|): x,y,z \in E \}| \gtrapprox {|E|}^2. \end{equation}

\begin{center}
\begin{figure}[ht] 
\begin{tikzpicture}[scale=2]
\coordinate (A) at (0cm,0cm);
\draw (A) node[anchor=east] {$y$};
\coordinate (B) at (1.4cm,1cm);
\draw (B) node[anchor=south] {$x$};
\coordinate (C) at (2cm,0cm);
\draw (C) node[anchor=west] {$z$};
\coordinate (D) at (0.5cm,-0.7cm);
\draw[thick, line cap=round] (A) -- (B);
\draw[thick, line cap=round] (B) -- (C);
\end{tikzpicture}
\caption{The hinge}\label{figure2} 
\end{figure}
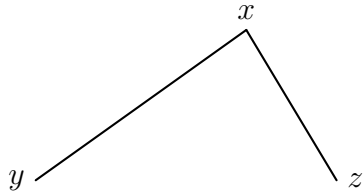
\end{center}

We can gain a non-optimal bound on the hinge (as as we will later state, general non-rigid configurations) using pinned distance bounds. One defines pinned distance in the plane as
$$ \Delta_x(E) = \{ |x-y| : y \in E\}, $$
for a pin $x \in E$.
The best known pinned result for the Erd\H os distance problem is due to Katz and Tardos (\cite{KT04}). They proved that there exists $x \in E$ such that 
\begin{equation}
\Delta_x(E)| \gtrapprox {|E|}^{(48-14e)/(55-16e)}. \label{Eq: PinnedPlaneBound}
\end{equation}

It follows that 
$$  |\{(|x-y|,|x-z|): x,y,z \in E \}| \gtrapprox {|E|}^{2(48-14e)/(55-16e)}.$$ 

Taking $E={\Bbb Z}^2 \cap {\left[ 0,\sqrt{n} \ \right]}^2$ shows that the estimate (\ref{hingequestion}) would be best possible. While this question looks like a natural variant of the Erd\H os distance conjecture, it appears to be very difficult. In order to study configurations of this type, we need to build a geometric mechanism for point configuration with distance relations encoded by combinatorial graphs. This is where we now turn our attention. The main theorem resulting from this machinery is Theorem \ref{Thm: PlanarGraphRigidSharp} below. 

\vskip.25in 

\subsection{Graph Rigidity} To gain sharp bounds on the size of individual congruence classes more structure on these finite point configurations which allow us to prove more sharp bounds in the plane we shall encode finite point frameworks using combinatorial graphs. Let $k\geq 1$ and let $K_{k+1}$ denote the complete graph with vertex set $\{1,\dots, k+1\}$ and edge set ordered lexicographically. Let $G_{k+1,m}$ be a subgraph of $K_{k+1}$ with $k+1$ vertices and $m$ edges inheriting the order. 

\begin{definition}
A \emph{$(k+1)$-tuple $\bm x$ in $\mathbb R^d$} is a tuple
$$\bm x = (x^1, x^2, \dots, x^{k+1}), \ x^j \in \mathbb R^d\ .$$
\end{definition}

\begin{definition} A \emph{framework of $G_{k+1,m}$ in ${\mathbb R}^d$} is a pair $(G_{k+1,m}, \bm x)$, where $\bm x$ is a $(k+1)$-tuple in $\mathbb R^d$.\end{definition} 

A convenient way to specify distances is through the distance function which we now define.
\begin{definition} \label{defdistancefunction} Given a graph $G_{k+1,m}$ we define the \emph{distance function} $f_{G_{k+1,m}}(\bm x)$ on $\bm x = (x^1, \dots, x^{k+1}) \in \mathbb R^{d(k+1)}$ by
$$f_{G_{k+1,m}}(\bm x) = \left(|x^i - x^j|\right)_{ij\in G_{k+1,m}}\ .$$
We also define the \emph{distance-squared function} $F_{G_{k+1,m}}(\bm x)$ by
$$F_{G_{k+1,m}}(\bm x) = \left(|x^i - x^j|^2\right)_{ij\in G_{k+1,m}}\ .$$
\end{definition}

\begin{definition}[Graph Distances]\label{graphdists} The value $f_{G_{k+1,m}}(\bm x)$ is called the \emph{$G_{k+1,m}$-distance of $\bm x$}. When we restrict our domain to some set $\mathfrak X\subseteq \mathbb R^{d(k+1)}$, we call $f_{G_{k+1,m}}(\bm x)$ a \emph{$G_{k+1,m}$-distance on $\mathfrak X$} and we say that $\bm x$ is a realization of this distance in $\mathfrak X$. The set of $G_{k+1,m}$-distances on $\mathfrak X$ is $f_{G_{k+1,m}}(\mathfrak X)$ and we denote it by $\Delta(G_{k+1,m}, \mathfrak X)$. \end{definition}

\begin{remark} The distance set $\Delta(G_{k+1,m}, \mathfrak X)$ depends on the numbering of the vertices and the order of the edges. Whereas the order of the edges is superficial, inducing only a permutation in the components of the $G_{k+1,m}$-distances, the numbering of the vertices can significantly change the $G_{k+1,m}$-distance set. Consider $\mathfrak X = \{\bm x_0\}\times \mathbb R^d \times \cdots \times \mathbb R^d$ and a graph $G=G'\cup G''\cup\{e\}$ where $e$ is a bridge between $G'$ and $G''$. Then if we number the vertices of $G'$ followed by those of $G''$, we essentially capture $G''$-distances only, whereas if we reverse the numbering order of the vertices of $G$ we will capture $G'$-distances only. In the rest of this paper we take $\mathfrak X = E^{k+1}$ for some finite $E\subset \mathbb R^d$, so that the numbering of the vertices becomes superficial as well. In particular, the size of the $G_{k+1,m}$-distance set is independent of the vertex numbering and edge order. \end{remark} 

We consider the following conjecture,

\begin{conjecture}\label{Conj: GraphDistanceSharp} Let $E$ be a finite set in the plane of size $n$ and $G_{k+1,m}$ be a connected graph on $k+1$ vertices having $m$ edges. Then, $|\Delta(G_{k+1,m}, E^{k+1})| \gtrapprox n^k$. \end{conjecture}

\begin{theorem}\label{Thm: ConjIsSharp}
Conjecture \ref{Conj: GraphDistanceSharp} is sharp.
\end{theorem}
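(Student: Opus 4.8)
The plan is to produce a single finite set $E\subset\mathbb R^2$ for which $|\Delta(G_{k+1,m},E^{k+1})|\lessapprox |E|^k$ holds for \emph{every} connected graph $G_{k+1,m}$ on $k+1$ vertices; this shows the exponent $k$ in Conjecture~\ref{Conj: GraphDistanceSharp} cannot be raised, which is the content of ``sharp'' here. The set is the classical Erd\H os lattice $E=\mathbb Z^2\cap[0,\sqrt n\,]^2$ already used above for the hinge, so that $|E|\asymp n$; indeed the ``moreover'' clause of Theorem~\ref{Thm: PlaneCongSharp} already records that this $E$ is extremal for the congruence-class count, and the present statement follows by reducing to it.

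First I would reduce to the complete graph. Since $G_{k+1,m}$ is a subgraph of $K_{k+1}$ on the same vertex set with the inherited edge order, the $G_{k+1,m}$-distance of any tuple $\bm x$ is obtained from its $K_{k+1}$-distance simply by deleting the entries indexed by the non-edges of $G_{k+1,m}$. Hence $\Delta(G_{k+1,m},E^{k+1})$ is the image of $\Delta(K_{k+1},E^{k+1})$ under a coordinate projection, so $|\Delta(G_{k+1,m},E^{k+1})|\le|\Delta(K_{k+1},E^{k+1})|$, and it is enough to count distinct full mutual-distance vectors of $(k+1)$-tuples drawn from $E$.

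Next I would bound $|\Delta(K_{k+1},E^{k+1})|$ by translation. Every realized $K_{k+1}$-distance comes from some $\bm x=(x^1,\dots,x^{k+1})\in E^{k+1}$, and replacing $\bm x$ by $\bm x-x^1$ (coordinatewise) changes none of the distances $|x^i-x^j|$ while moving the tuple to $(0,x^2-x^1,\dots,x^{k+1}-x^1)$. Each difference $x^j-x^1$ lies in $E-E\subseteq\mathbb Z^2\cap[-\sqrt n,\sqrt n\,]^2$, a set of size $\asymp n$. Thus the number of such translated tuples, hence the number of distinct $K_{k+1}$-distance vectors, is $\lessapprox n^k$, and combined with the projection step this yields $|\Delta(G_{k+1,m},E^{k+1})|\lessapprox n^k\asymp|E|^k$ for all connected $G_{k+1,m}$, which is the asserted sharpness. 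As a complement, Theorem~\ref{Thm: PlaneCongSharp} applied to this $E$ gives $|M_2(k)(E)|\gtrapprox n^k$, and since a non-singular planar configuration is determined by its pairwise distances up to at most one reflection, the natural surjection of $M_2(k)(E)$ onto the distance vectors of non-singular tuples is at most two-to-one; hence $|\Delta(K_{k+1},E^{k+1})|\gtrapprox n^k$ as well, so for $G_{k+1,m}=K_{k+1}$ the conjectured exponent is exactly attained.

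There is no genuine obstacle here: the construction is the standard lattice example and the argument is soft counting. The two points that need a little care are the projection step, which is legitimate precisely because passing from $K_{k+1}$ to $G_{k+1,m}$ only \emph{forgets} prescribed distances and never constrains new ones, and the choice to use translation rather than a congruence-class count for the upper bound, which conveniently avoids treating singular tuples separately. I would also flag that the matching lower bound above is claimed only for $K_{k+1}$; for a sparse graph such as the $2$-chain, whether the lattice already achieves $n^k$ is itself delicate — this is essentially the hinge problem from the Introduction — but such a lower bound is not needed for the sharpness of Conjecture~\ref{Conj: GraphDistanceSharp}.
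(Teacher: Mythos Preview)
Your proof is correct and follows the paper's intended approach: the paper does not give a separate proof of this theorem, relying instead on the lattice example $E=\mathbb Z^2\cap[0,\sqrt n\,]^2$ already invoked for the hinge and for the ``moreover'' clause of Theorem~\ref{Thm: PlaneCongSharp} (and again at the end of Section~4). Your reduction to $K_{k+1}$ by coordinate projection followed by the translation-class bound $|\Delta(K_{k+1},E^{k+1})|\le|E-E|^k\asymp n^k$ makes explicit the counting that the paper leaves to the reader.
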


Our main results here concern the size of the set $\Delta(G_{k+1,m}, E^{k+1})$. An important role is played by properties of the graph $G_{k+1,m}$. In particular it is essential whether the graph is rigid or not. 

The key heuristic notion of this paper is that a graph $G_{k+1,m}$ is \emph{rigid in $\mathbb R^d$} if once the $m$ quantities $t_{ij}$ in
$$|x^i - x^j| = t_{ij}, \qquad ij\in G_{k+1,m}$$
are specified, the other distances $|x^i - x^j|$ for $ij\not\in G_{k+1,m}$ can only take finitely many values as the frameworks $(G_{k+1,m}, \bm x)$ vary over the set of non-degenerate frameworks (see generic frameworks below for a formal definition of this non-degeneracy).

For technical reasons, we use a more precise and flexible notion of rigidity described below. A simple example that illustrates the technical obstacles one must contend with is the following. Consider a quadrilateral in the plane with side-lengths $1,1,1,3$. This configuration is perfectly rigid in the heuristic sense, but it is not {\it minimally infinitesimally rigid}, as the reader will see, roughly because the rigidity in this case is not stable under small perturbations. 

\vskip.125in 

We now turn to precise definition. 
 
\begin{definition}
An \emph{infinitesimal motion $\bm u = (u^1, \dots, u^{k+1})$ in $\mathbb R^d$ of $G_{k+1,m}$ at $\bm x$} is a $(k+1)$-tuple $\bm u$ of vectors $u^j\in \mathbb R^d$ such that
$$DF_{G_{k+1,m}}(\bm x)\cdot \bm u = 0\ .$$
The set of infinitesimal motions in $\mathbb R^d$ of $G_{k+1,m}$ at $\bm x$ is the kernel of $DF_{G_{k+1,m}}(\bm x)$.
Let us denote by $\mathcal V(G_{k+1,m}, \bm x)$ the set of infinitesimal motions in $\mathbb R^d$ of $G_{k+1,m}$ at $\bm x$. Let $\mathcal D(\bm x)$ be the set of infinitesimal motions in $\mathbb R^d$ of $K_{k+1}$ at $\bm x$.
\end{definition}
\begin{remark}
It is evident that $\mathcal D(\bm x) \subseteq \mathcal V(G_{k+1,m}, \bm x)$ since the system of equations $DF_{G_{k+1,m}}(\bm x)\cdot \bm u = 0$ is included in $DF_{K_{k+1}}(\bm x)\cdot \bm u = 0$.
\end{remark}
\begin{definition}\label{infrigidframework}
A framework $(G_{k+1,m}, \bm x)$ is called \emph{infinitesimally rigid in $\mathbb R^d$} when \linebreak
$\mathcal V(G_{k+1,m}, \bm x) = \mathcal D(\bm x)$.
\end{definition}

It is unnecessarily restrictive to require of a graph to have all its frameworks be infinitesimally rigid. We shall only require it of a certain family of frameworks which we call generic frameworks. Below we define the set of generic tuples as the complement of the zero set of a certain polynomial. This notion is independent of the graph $G_{k+1,m}$, depending only on the dimension $d$ and the number of vertices $k+1$. \\

We also define the notion of independence for subsets of the edge set of $K_{k+1}$ and of maximal independence for subsets of the edge set of $G_{k+1,m}$. 

Let us use the following notation for our matrices: If $a_{ij}$ is a matrix, $(i,j)\in I\times J$, then for $B\subseteq I, C\subseteq J$, we defined $a_{B,C}$ to be the submatrix $a_{ij}$ with $(i,j)\in B\times C$. 

\begin{definition}
We say that $\bm x \in \mathbb R^{d(k+1)}$ is a \emph{regular tuple of $F_{G_{k+1,m}}$} if $\operatorname{rank}DF_{G_{k+1,m}}$ attains its global maximum at $\bm x$. A framework $(G_{k+1,m}, \bm x)$ is a \emph{regular framework} if $\bm x$ is a regular tuple of $F_{G_{k+1,m}}$.
\end{definition}

\begin{definition}
A subset $H$ of the edge set of $K_{k+1}$ is called \emph{independent in $\mathbb R^d$ with respect to $\bm x_0 \in \mathbb R^{d(k+1)}$} if the row vectors of $DF_{K_{k+1}}(\bm x_0)$ corresponding to $H$ are linearly independent. We call $H$ \emph{independent in $\mathbb R^d$} if there exists some $\bm x_0$ so that $H$ is independent with respect to $\bm x_0$, and $\bm x_0$ is said to be a \emph{witness to the independence of $H$}. We also call $H$ a \emph{maximally independent (in $\mathbb R^d$) subset of edges of $G_{k+1,m}$} when it is independent and it is not contained in a larger independent edge set of $G_{k+1,m}$.
\end{definition}

\begin{definition}\label{defgenericpts}
For any nonempty independent in $\mathbb R^d$ set $H$ of edges of $K_{k+1}$ we define the polynomial $P_{H}(\bm x)$ to be the sum of squares of $|H|\times |H|$-minors of the submatrix of rows of $DF_{K_{k+1}}$ corresponding to edges of $H$.  Thus,
\begin{align*}
P_H(\bm x) = \sum_{\substack{A\subset\{1,\dots,d(k+1)\} \\ |A| = |H|}} \left|\det (DF_{K_{k+1}}(\bm x)_{H,A})\right|^2\ .
\end{align*}
Let $X_H$ denote the zero set of $P_H$.

We define the set of \emph{generic tuples of $\mathbb R^d$} to be the complement of the zero set $X$ of the polynomial $P(\bm x)$ defined by
\begin{align*}
P(\bm x) = \prod_{H\text{ independent}} P_H(\bm x)\ .
\end{align*}
We call $X$ the set of \emph{critical tuples of $\mathbb R^d$}.
\end{definition}
\begin{remark}
We have $X = \cup_H X_H$ where the union is taken over all the edge sets $H$ which are independent and the generic tuples are then equal to $\mathbb R^{d(k+1)}\setminus X$. Moreover, if a set $H$ of edges is independent then by Definition \ref{defgenericpts} it is generically independent, i.e. independent with respect to any generic $\bm x$. In fact, the set of generic tuples is precisely the set of tuples that simultaneously witness the independence of every independent edge set.
\end{remark}
\begin{remark}
The polynomial $P(\bm x)$ is nontrivial because every $P_H$ is nontrivial since there is at least one witness $\bm x_H$ for the independence $H$, which means that $P_H(\bm x_H) \not = 0$. Thus $X$ is a proper algebraic variety of dimension
\begin{align}
\dim X \leq d(k+1) - 1 \label{critdim}\ .
\end{align}
\end{remark}
\begin{remark}\label{genericimpliesregular}
It is immediate from the definitions that generic tuples are regular tuples. The other implication does not hold in general.
\end{remark}
\begin{definition}
A framework $(G_{k+1,m}, \bm x)$ is called \emph{generic in $\mathbb R^d$} if $\bm x$ is a generic tuple in $\mathbb R^d$ and it is called \emph{critical in $\mathbb R^d$} if $\bm x$ is a critical tuple in $\mathbb R^d$.
\end{definition}

Thus we can now complete our formal nation of graph rigidity rigid.

\begin{definition}
A graph $G_{k+1,m}$ is called infinitesimally rigid in $\mathbb{R}^d$ if all its generic frameworks are infinitesimally rigid. It is called minimally infinitesimally rigid in $\mathbb{R}^d$ if it is infinitesimally rigid and no proper subgraph (on the same vertex set) is infinitesimally rigid.
\end{definition}

Using the notion of minimally infinitesimally rigid we can gain sharp results for many graphs. Formally,

\begin{theorem} \label{Thm: PlanarGraphRigidSharp} If $G_{k+1,m}$  be a minimally infinitesimally rigid connected graph on $k+1$ vertices having $m$ edges and $E$ be a finite set in the plane of size $n$. Then, $|\Delta(G_{k+1,m}, E^{k+1})| \gtrapprox n^k$.\end{theorem}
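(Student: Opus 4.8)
The plan is to reduce the statement to Theorem~\ref{Thm: PlaneCongSharp}. Write $G=G_{k+1,m}$. First, minimal infinitesimal rigidity in the plane pins down $m$: at a generic tuple the rank of $DF_{K_{k+1}}$ equals its global maximum $2(k+1)-\binom{3}{2}=2k-1$ (the heuristic dimension count of the introduction, with $d=2$), so infinitesimal rigidity of $G$ gives $\operatorname{rank}DF_G(\bm x)=2k-1$ at generic $\bm x$, and minimality forces the $m$ rows of $DF_G(\bm x)$ to be linearly independent there — otherwise discarding a dependent edge would leave a proper subgraph of the same generic rank, still infinitesimally rigid. Hence $m=2k-1$. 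Now define
$$\Phi\colon M_2(k)(E)\longrightarrow\Delta(G,E^{k+1}),\qquad \Phi([\bm x])=f_G(\bm x),$$
which is well defined on congruence classes of non-singular tuples because $f_G$ is invariant under rigid motions. By Theorem~\ref{Thm: PlaneCongSharp} we have $|M_2(k)(E)|\gtrapprox n^k$, so it is enough to show that, outside a negligible set of exceptional classes, $\Phi$ is at most $C=C(G)$-to-one.

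The bound on generic fibres is where rigidity enters. Let $V$ be the Zariski closure of the image of the complexified distance-squared map $F_{K_{k+1}}\colon\mathbb C^{2(k+1)}\to\mathbb C^{\binom{k+1}{2}}$; it is irreducible of dimension $2k-1=m$. Since $G$ is infinitesimally rigid, $F_G=\pi_G\circ F_{K_{k+1}}$ is a submersion at every generic tuple, where $\pi_G$ denotes the coordinate projection onto the $G$-edges; hence $\pi_G|_V\colon V\to\mathbb C^m$ is dominant between irreducible varieties of the same dimension, and therefore generically finite of some degree $C=C(G)$. Thus there is a proper subvariety $Z\subsetneq\mathbb C^m$ with $\#\bigl(\pi_G^{-1}(t)\cap V\bigr)\le C$ for all $t\notin Z$. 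As two incongruent non-singular tuples have distinct distance-squared vectors (up to a finite chirality ambiguity) and these vectors lie on $V$, we conclude $|\Phi^{-1}(t)|\le 2C$ for every $t\notin Z$, and hence
$$|\Delta(G,E^{k+1})|\ \ge\ |\operatorname{Im}\Phi|\ \ge\ \frac{1}{2C}\Bigl(|M_2(k)(E)|-\#\{[\bm x]\in M_2(k)(E):f_G(\bm x)\in Z\}\Bigr).$$

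The remaining — and, I expect, the main — difficulty is to bound the exceptional count $\#\{[\bm x]:f_G(\bm x)\in Z\}$ so that the right-hand side stays $\gtrapprox n^k$. Enlarging the critical set $X$ of Definition~\ref{defgenericpts} by the proper subvariety $F_G^{-1}(Z)$, any class with $f_G(\bm x)\in Z$ has all of its representatives on a fixed proper, rigid-motion-invariant subvariety $W\subsetneq\mathbb R^{2(k+1)}$. One should then analyse $W$ as a finite union of strata, each imposing an independent polynomial "degeneracy" relation on $\bm x$ (a coincidence $x^i=x^j$, a collinear triple, or a sub-framework whose edge rows fail generic independence), and show that the congruence classes supported on each stratum contribute only a lower-order amount: either there are at most $\lesssim n^{k-1}$ of them, or else — because non-degenerate configurations are then plentiful — $|M_2(k)(E)|$ already exceeds the exceptional count by a positive power of $n$. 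In either case the displayed inequality yields $|\Delta(G,E^{k+1})|\gtrapprox n^k$. It is precisely here that the infinitesimal (rather than merely heuristic) notion of rigidity is indispensable: the $1,1,1,3$ quadrilateral of the introduction shows that a heuristically rigid graph may have its "rigidity locus" fail to be of the expected codimension, which would wreck this step.

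The base of the argument is $k=1$: there $G$ is a single edge, $\Delta(G,E^2)=\Delta(E)$, and $|\Delta(E)|\gtrapprox n$ is the Guth–Katz theorem. One could instead prove the theorem by induction on $k$ through a Henneberg reduction of the Laman graph $G$: a degree-two vertex is dealt with directly — fixing a realisation of the smaller graph and letting the new vertex range over $E$ produces $\gtrsim n/2$ new $G$-distances via the at-most-two-to-one hinge map $z\mapsto(|z-x^a|,|z-x^b|)$ — while a degree-three vertex is handled by first passing to $\hat G=G\cup\{e\}$ for the non-edge $e$ produced by the Henneberg move (so $\hat G$ adjoins the new vertex to a smaller Laman graph by three pendant edges, and $z\mapsto(|z-x^a|,|z-x^b|,|z-x^c|)$ is injective off collinear triples), bounding $|\Delta(\hat G,E^{k+1})|\gtrapprox n^k$ by induction, and then descending to $G$ by invoking the rigidity of $G$ to bound the fibres of the forgetful map $\Delta(\hat G,E^{k+1})\to\Delta(G,E^{k+1})$. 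That variant runs into the same obstacle — controlling the critical/degenerate locus — in its descent step.
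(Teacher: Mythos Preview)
Your proposal is incomplete, and you identify the gap yourself: you never bound the exceptional count $\#\{[\bm x]\in M_2(k)(E):f_G(\bm x)\in Z\}$. The sketch you offer --- stratify $W=F_G^{-1}(Z)$ and argue each stratum contributes at most $\lesssim n^{k-1}$ classes --- is not carried out, and for an arbitrary finite $E$ there is no reason the tuples of $E^{k+1}$ should avoid $W$ in the proportion you need. The Henneberg alternative you mention has, as you say, the same obstacle at the descent step. So as written this is a plan with a hole at exactly the point where rigidity has to do real work.

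The paper sidesteps this difficulty by \emph{not} passing through $M_2(k)(E)$ and a fiber-count on $\Phi$. Instead it runs an $L^2$-energy argument directly on the fibers of $f_G$ over $E^{k+1}$. The two inputs, quoted from \cite{CIMP17}, are: (i) for \emph{every} tuple $\bm x$ (generic or not), the level set $N_{\bm x}=f_G^{-1}(f_G(\bm x))$ has at most $C_{k,d}$ connected components modulo rigid motions; and (ii) two tuples in the same connected component of $N_{\bm x}$ differ by a rigid motion. Writing $v(t)=|\{\bm x\in E^{k+1}:f_G(\bm x)=t\}|$ and splitting $v(t)$ into its $\le C_{k,d}$ component pieces $\tilde v_i(t)$, Cauchy--Schwarz gives
\[
|E|^{2(k+1)}=\Bigl(\sum_t v(t)\Bigr)^2\le C_{k,d}^2\,|\Delta(G,E^{k+1})|\sum_t \tilde v_0(t)^2,
\]
with $\tilde v_0$ the largest piece. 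By (ii), $\sum_t\tilde v_0(t)^2$ is dominated by $\sum_{\theta,\tau} v_\theta(\tau)^{k+1}$ where $v_\theta(\tau)=|\{(x,y)\in E^2:x-\theta y=\tau\}|$; the trivial bound $v_\theta(\tau)\le |E|$ reduces this to $|E|^{k-1}\sum_{\theta,\tau}v_\theta(\tau)^2$, and Guth--Katz finishes: $\sum_t\tilde v_0(t)^2\lesssim |E|^{k+2}\log|E|$, hence $|\Delta(G,E^{k+1})|\gtrapprox |E|^k$.

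The point to take away is that (i) is a \emph{uniform} topological bound, valid at every tuple, so no generic/degenerate dichotomy ever arises; your generically-finite-map argument only gives a fiber bound off a bad set $Z$, and that is exactly the information you are missing.
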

\vspace{0cm}

\vskip.125in 

\subsection{Erd\H os Pinned Distance Conjecture}

Our final result allows us to drop the condition that our graph $G_{k+1,m}$ need be rigid. We do this by evoking Erd\H os' pinned distance conjecture. As states earlier the current best know result is \ref{Eq: PinnedPlaneBound} due to Katz and Tardos (\cite{KT04}) The established conjecture is the following:

\begin{conjecture}
For a finite point set $E$ in $\mathbb{R}^d$ there is a point $y$ in $E$ such that $|\Delta_y(E)| \approx |E|^{\frac{2}{d}}$.
\end{conjecture}

It is clear, by considering the integer lattice, that the above conjecture is the best one can hope for. With our current technology we are far from this sharp result, however will demonstrate the general graph distances result is a closely linked, though weaker, result.

\begin{conjecture}\label{Conj: GraphDistanceSharp} Let $E$ be a finite set in the plane of size $n$ and $G_{k+1,m}$ be a connected graph on $k+1$ vertices having $m$ edges. Then, $|\Delta(G_{k+1,m}, E^{k+1})| \gtrapprox n^k$. \end{conjecture} 

\begin{theorem} \label{Thm: GenGraphErdosPin} The conjecture above holds for any $G_{k+1,m}$ if the pinned distance conjecture is assumed. \end{theorem}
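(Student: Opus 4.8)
The plan is to reduce Conjecture~\ref{Conj: GraphDistanceSharp} to the case of trees, bootstrap the single good pin supplied by the pinned distance conjecture into a positive‑density family of good pins, and then assemble a tree‑framework distance set one vertex at a time, using the tree structure to keep the coordinates introduced at successive stages disjoint. For the first reduction: if $T$ is a spanning tree of a connected $G_{k+1,m}$, then $f_T$ is $f_{G_{k+1,m}}$ followed by the coordinate projection forgetting the edges of $G_{k+1,m}$ outside $T$, so $|\Delta(T,E^{k+1})|\le |\Delta(G_{k+1,m},E^{k+1})|$, and it suffices to prove $|\Delta(T,E^{k+1})|\gtrapprox n^k$ for every tree $T$ on $k+1$ vertices.

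Call $p$ a \emph{good pin} of a finite set $F\subset\mathbb R^2$ when $|\Delta_p(F)|\gtrapprox|F|$. The first thing I would establish is that, under the pinned distance conjecture, for each fixed $\eta\in(0,1)$ every finite $F$ has at least $(1-\eta)|F|$ good pins: apply the conjecture to $F$ to get one good pin, delete it, and iterate; while at least an $\eta$‑fraction of $F$ survives, the conjecture applied to the surviving set $F'$ gives a point $p$ with $|\Delta_p(F)|\ge|\Delta_p(F')|\gtrapprox|F'|\ge\eta|F|\gtrapprox|F|$. This is where the hypothesis does its real work: it is stated for \emph{all} finite sets, so it can be re‑applied to shrinking subsets of $F$ (the implied constants pick up a dependence on $\eta$, hence on $k$, which is harmless).

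Next I would set up the inductive engine. For a tree $T$ rooted at a leaf $r$ and a point $p\in E$, let $\mathcal D_p(T,r):=\{f_T(\bm x):\bm x\in E^{V(T)},\ x^r=p\}$ and put $j:=|V(T)|-1$. I claim that $\mathcal G(T,r):=\{p\in E:|\mathcal D_p(T,r)|\gtrapprox n^{\,j}\}$ satisfies $|\mathcal G(T,r)|\ge n/2$, and I would prove this by induction on $j$, all implied constants depending only on $k$. When $j=1$, $T$ is a single edge $r\!-\!v$, $\mathcal D_p(T,r)=\Delta_p(E)$, and $\mathcal G(T,r)$ is exactly the set of good pins of $E$. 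For $j\ge2$, let $w$ be the unique neighbour of $r$ and let $C_1,\dots,C_s$ be the connected components of $T-\{r,w\}$, each joined to $w$ by a single edge; let $\widehat C_i$ be $C_i$ together with $w$ and that edge, a tree rooted at the leaf $w$ with strictly fewer vertices than $T$. A tuple for $T$ with $x^r=p$ amounts to a position $q=x^w\in E$ plus, independently for each $i$ (the vertex sets $V(C_i)$ being disjoint), a tuple for $\widehat C_i$ with $w$ at $q$, while the edge $rw$ contributes only $|p-q|$; hence, writing $Q:=\bigcap_{i=1}^s\mathcal G(\widehat C_i,w)$,
$$|\mathcal D_p(T,r)|\ \ge\ \sum_{t\in\Delta_p(Q)}\ \max_{\substack{q\in Q\\|p-q|=t}}\ \prod_{i=1}^s\bigl|\mathcal D_q(\widehat C_i,w)\bigr|\ \gtrapprox\ |\Delta_p(Q)|\cdot n^{\,j-1},$$
since $\sum_i(|V(\widehat C_i)|-1)=j-1$ and, by the inductive hypothesis, every factor is $\gtrapprox n^{|V(\widehat C_i)|-1}$ for $q\in Q$. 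Each recursion node erodes the density only by $O(\eta)$ and the recursion tree has at most $k$ nodes (each node is tagged with a distinct edge of $T$), so fixing $\eta=1/(2k)$ keeps every $\mathcal G(\cdot)$, hence every $Q$, of size $\ge n/2$. Choosing $p$ to be a good pin of $Q$ — all but an $\eta$‑fraction of $Q$ qualify — then gives $|\Delta_p(Q)|\gtrapprox|Q|\gtrapprox n$ and so $|\mathcal D_p(T,r)|\gtrapprox n^{\,j}$, whence $\mathcal G(T,r)$ contains the good pins of $Q$, closing the induction. Rooting the spanning tree of $G_{k+1,m}$ at a leaf gives $j=k$; picking $p\in\mathcal G(T,r)$ yields $|\Delta(G_{k+1,m},E^{k+1})|\ge|\Delta(T,E^{k+1})|\ge|\mathcal D_p(T,r)|\gtrapprox n^k$. (The same argument in $\mathbb R^d$, using the dimension‑$d$ pinned conjecture, would give $\gtrapprox n^{2k/d}$, which is $n^k$ when $d=2$.)

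The main obstacle is the bookkeeping linking the last two paragraphs: the pinned conjecture supplies only \emph{one} good pin, so one must carry a reservoir of good pins dense enough to survive intersection over the boundedly many subtrees $\widehat C_i$ hanging off a root, and one must verify that the density losses accumulated through a recursion of depth up to $k$ stay bounded away from $1$ — this forces $\eta$ to be small in terms of $k$, and some care is needed to make the chain of ``$\gtrapprox$'' estimates uniform (the implied constants grow with $k$, but $k$ is fixed). The only genuinely structural ingredient is that $T$ is a tree, so that removing $w$ disconnects the $C_i$ and no edge runs between two of them; this independence of coordinates is exactly what is unavailable for graphs containing cycles, and is why the non‑rigid cases of the conjecture resist a direct attack.
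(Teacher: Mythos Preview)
Your proof is correct and follows the same approach as the paper's: reduce to a spanning tree, bootstrap the single good pin guaranteed by the pinned distance conjecture into a positive-density family of good pins (this is the paper's Lemma~\ref{Lem: ManyRichPins}), and then count tree-framework distances by moving along the tree using the independence of the branches. Your inductive setup and density bookkeeping---tracking that each of the at most $k$ recursion nodes costs only an $O(\eta)$-fraction so that the intersections $Q$ stay of size $\ge n/2$---are in fact more carefully articulated than the paper's rather terse argument, which simply asserts that the $k$ edge-distance choices along the spanning tree are ``independent'' and multiplies.
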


\section{Proof of Theorem \ref{Thm: PlaneCongSharp}} \label{Sec: PlaneCongSharpProof}

We begin by deriving the properties of $M_d(k)(\mathbb{R}^d)$ that we shall need in the proof. 

\vskip.125in 

\subsection{Congruence Classes of $k+1$ tuples}

In this section we build a congruence relation - using the action of the orthogonal group - to provide a more general class of configuration where we can gain sharp results on distance tuples.
~\\
A $(k+1)$-point configuration in $\mathbb{R}^d$ is given by an arbitrary choice of point in $\mathbb{R}^{d(k+1)}$. We label this configuration as 
$(v_0, \dots, v_k)$ with $v_j \in \mathbb{R}^d$ initially.

Recall, $k \geq d$ is assumed and we say that the configuration above is non-singular if its first $(d+1)$ vectors $\{ v_0, \dots v_d \}$ are affinely independent. 

We denote the space of these non-singular $(k+1)$-point configurations in $\mathbb{R}^d$ by $N_d(k)$.

~\\
{\raggedleft \bf Step 1: Passage to origin pinned configurations.}
~\\

Given a non-singular configuration $(v_0, \dots, v_k)$ we define the associated origin-pinned configuration as $(u_1, \dots u_k)$ 
where $u_j = v_j - v_0$ for all $1 \leq j \leq k$. The first $d$-resulting vectors of this process form an invertible matrix whose columns are $u_1, \dots, u_d$ as these vectors were required to be linearly independent and hence are a basis of $\mathbb{R}^d$. Thus we will write the associated origin-pinned configuration as $(\mathbb{A}, u_{d+1}, \dots, u_k)$.

The space for non-singular origin-pinned configurations is hence identified as 
$$GL_d(\mathbb{R}) \times \mathbb{R}^{d(k-d)}.$$

So, we have a map 
$$\pi: N_d(k) \to GL_d(\mathbb{R}) \times \mathbb{R}^{d(k-d)}$$ 
given by 
$$\pi(v_0, \dots, v_k) = (\mathbb{A}, v_{d+1}-v_0, \dots, v_k-v_0).$$ 
This map is equivalent to passage to translation classes of non-singular configurations of $(k+1)$ points in $\mathbb{R}^d$.

~\\
{\raggedleft \bf Step 2: Analysis of $O(d)$ action on pinned configurations and ``moving frames."}
~\\

Congruence classes of such pinned configurations are given by $O(d)$-orbits of the action given by $B \in O(d)$ acts on $(A, u_{d+1}, \dots, u_k)$ by sending it to $(BA, Bu_{d+1}, \dots, Bu_k)$.

This action is complicated by the fact that $O(d)$ acts on both the matrix $A$ and the remaining vectors. To simplify future formulas we fix this by using a method of ``moving frames."

All this means is that as the columns of $A$ are a basis of $\mathbb{R}^d$, we may expand each $u_j$ when $j > d$ as a linear combination of $u_1, \dots u_d$. 
If $u_j = \sum_{k=1}^d c_{jk} u_k$ we will define $c_j=(c_{j1}, \dots, c_{jd})^T$. Equivalently $Ac_j=u_j$, note as $A$ depends on the first $d$ vectors, this is a variable change of basis, i.e. a ``moving frame."

Notice now when $B \in O(d)$ acts, $Bu_j=\sum_{k=1}^d c_{jk} Bu_k$, or equivalently $BAc_j = Bu_j$, and so the $c_j$ vectors themselves are unchanged by the $O(d)$-action.

In other words if we reencode pinned configurations as $(u_1, \dots, u_d, c_{d+1}, \dots, c_{k})$ then the $O(d)$ action only acts on the first $d$-coordinates and leaves the remaining coordinates unchanged. Thus the action becomes 
$$B \cdot (A, c_{d+1}, \dots, c_k) = (BA, c_{d+1}, \dots, c_k),$$ 
so now $O(d)$ will only act on the matrix slot in this coordinate system.

To summarize, we will now use this ``moving frames" coordinate system, and thus an origin pinned configuration is given by $(A, c_{d+1}, \dots, c_k) \in GL_d(\mathbb{R}) \times \mathbb{R}^{d(k-d)}$ where $Ac_j=u_j$ relates the original vectors to these new $c$-vectors. 

\vskip.125in 

{\raggedleft \bf Step 3: Quotienting $O(d)$-action.} Using the moving frame coordinate system, nonsingular origin-pinned configurations of $(k+1)$ points in $\mathbb{R}^d$ is the space $GL_d(\mathbb{R}) \times \mathbb{R}^{d(k-d)}$. The action of $O(d)$ is given by $B \cdot (A, c_{d+1}, \dots, c_k) = (BA, c_{d+1}, \dots, c_k)$ so the final space for nonsingular congruence classes of configurations of $(k+1)$-points in $\mathbb{R}^d$, which we will call $M_d(k)(\mathbb{R}^d)$, is given by 
$$M_d(k)(\mathbb{R}^d) = (O(d) \backslash GL_d(\mathbb{R})) \times \mathbb{R}^{d(k-d)}.$$
Where this is the quotient of the left-action of $O(d)$.
To make this more explicit, we recall the $LU$ or $UL$-decomposition of nonsingular matrices that comes from the Gram-Schmidt process.
Any $A \in GL_d(\mathbb{R})$ can be written $A=BC$ for unique $B \in O(d), C \in L$ where $L$ is the Lie group of upper triangular matrices with positive real entries on the diagonal.

This means as manifolds (but not as groups) $GL_d(\mathbb{R})$ is diffeomorphic to $O(d) \times L$ where the left action of $O(d)$ on $GL_d(\mathbb{R})$ translates to an action on 
$O(d) \times L$ where $O(d)$ acts only on the left factor by left translation. Thus $O(d) \backslash GL_d(\mathbb{R})$ is naturally diffeomorphic to the Lie group $L$.

Putting this all together we have:

\begin{summary}
Let $(v_0, v_1, \dots v_k)$ be a $(k+1)$-configuration in $\mathbb{R}^d$ with $k \geq d$ and the first $d+1$ vectors affinely independent. Then we define $u_j=v_j - v_0, 1 \leq j \leq k$ and 
make a matrix $A \in GL_d$ with $u_1, \dots, u_d$ as column vectors. The data $(A, u_{d+1}, \dots, u_k) \in GL_d \times \mathbb{R}^{d(k-d)}$ encodes the origin-pinned configurations or equivalently the translation classes of nonsingular configurations. 

We then change coordinates to a moving frame coordinate system $\mathbb{A}c_j = u_j$ for $d+1 \leq j \leq k$.
The data $(A, c_{d+1}, \dots, c_k)$ also encodes pinned configurations, but now the $O(d)$-action is only on the $A$-coordinate.

Finally we mod the $O(d)$ action to get the space of congruence classes of nonsingular $(k+1)$ point configurations in $\mathbb{R}^d$, which is called $M_d(k)(\mathbb{R}^d)$.

$$
M_d(k)(\mathbb{R}^d) = L \times \mathbb{R}^{d(k-d)},
$$

where the final data is $(C, c_{d+1}, \dots c_{k})$ with $A=BC, B \in O(d), C \in L$ the $UL$-decomposition of $A$. 
$L$ is the Lie group of upper triangular matrices with positive real entries on the diagonal.
\end{summary}

\vspace{0.2cm}

\subsection{Proof of Theorem \ref{Thm: PlaneCongSharp}}~\\

To prove Theorem \ref{Thm: PlaneCongSharp} we shall use the following famous Theorem of Guth and Katz that resolved the Erd\H os distance problem in the plane (see \cite{GK15}).

\begin{theorem}[Guth-Katz]
Suppose that $E$ is a finite point set in $\mathbb{R}^2$ and let $\theta$ be an orthogonal transformation on the plane, with $v_\theta(t) = \{(x,x') \in E^2 : x- \theta x' = t\}$. Then,
$$ \sum_{t\in \mathbb{R}^2}\sum_{\theta \in O(2)} v^2_\theta(t) \lesssim |E|^3\log(|E|).$$
\end{theorem}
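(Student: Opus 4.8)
The plan is to run the Elekes--Sharir reduction, which converts the left-hand side into an incidence count between lines in $\mathbb{R}^3$, and then to quote the polynomial-partitioning theorem of Guth and Katz for such line families. Write $N=|E|$. Expanding the square, $\sum_{t}\sum_{\theta}v_\theta(t)^2$ counts tuples $(\theta,t,x,x',y,y')$ with $x,x',y,y'\in E$ and $x-\theta x' = t = y-\theta y'$; eliminating $t$ it is the number of $(\theta,x,x',y,y')$ with $x-y=\theta(x'-y')$. Removing the (uncountably many but structurally trivial) terms coming from $(x,x')=(y,y')$ — i.e. passing to the genuinely finite quantity $\sum_{t,\theta}v_\theta(t)(v_\theta(t)-1)$ — and noting that every surviving quadruple has $x\neq y$, hence $x'\neq y'$ and $|x-y|=|x'-y'|\neq 0$, and is matched by exactly one rotation and one reflection, it suffices to prove
$$Q\ :=\ \#\{(x,x',y,y')\in E^4 : x\neq y,\ |x-y|=|x'-y'|\}\ \lesssim\ N^3\log N .$$

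Identify each orientation-preserving rigid motion of the plane with a point of $\mathbb{R}^3$ via Elekes--Sharir: a rotation by angle $\phi\neq 0$ about a center $(p_1,p_2)$ is sent to $(p_1,p_2,\cot(\phi/2))$, the translations forming a stratum at infinity. For distinct $a,b\in\mathbb{R}^2$ the set of such motions carrying $a$ to $b$ is a straight line $\ell_{a,b}\subset\mathbb{R}^3$, two such lines $\ell_{a,b},\ell_{c,d}$ meeting (affinely) precisely when a rotation carries $a\mapsto b$ and $c\mapsto d$ — which, once the case where the matching isometry is a translation is excluded, happens exactly when $|a-c|=|b-d|$ — and the number of lines of the family $\mathcal{L}:=\{\ell_{a,b}:a,b\in E,\ a\neq b\}$ through a point $g\in\mathbb{R}^3$ equals $|E\cap gE|\le N$. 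Hence $Q$ equals, up to the $O(N^3)$ contribution of translation quadruples (bounded by $\sum_v|E\cap(E+v)|^2\le N\sum_v|E\cap(E+v)|=N^3$), the number of ordered pairs of distinct intersecting lines in $\mathcal{L}$, namely $\sum_{p}r(p)(r(p)-1)$ with $r(p)$ the number of lines of $\mathcal{L}$ through $p$; here $|\mathcal{L}|\le N^2$ and $r(p)\le N$. Before invoking Guth--Katz one must check that no plane and no regulus contains more than $O(N)=O(\sqrt{|\mathcal{L}|})$ lines of $\mathcal{L}$; this is a self-contained geometric lemma, since coplanarity (or co-reguliness) of a large subfamily $\{\ell_{a_i,b_i}\}$ forces $\{a_i\}$ and $\{b_i\}$ onto a common line or circle, after which one counts directly that at most $O(N)$ lines $\ell_{a,b}$ with $a,b\in E$ can arise this way.

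Now apply the main theorem of Guth--Katz on lines in $\mathbb{R}^3$: a family of $L$ lines with at most $\sqrt{L}$ of them in any common plane or regulus has $O(L^{3/2}k^{-2}+Lk^{-1})$ points incident to at least $k$ lines, for every $k\ge 2$. Taking $L=|\mathcal{L}|\le N^2$ and decomposing dyadically over richness levels $k=2^j$, $1\le j\le\log_2 N$ (recall $r(p)\le N$),
$$\sum_{p:\,r(p)\ge 2}r(p)(r(p)-1)\ \lesssim\ \sum_{j=1}^{\log_2 N}2^{2j}\,\#\{p:r(p)\ge 2^j\}\ \lesssim\ \sum_{j=1}^{\log_2 N}\bigl(L^{3/2}+L\,2^{j}\bigr)\ \lesssim\ N^{3}\log N ,$$
which gives $Q\lesssim N^3\log N$, and hence the claimed bound on $\sum_{t,\theta}v_\theta(t)^2$.

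\textbf{Main obstacle.} Everything up to the last step is elementary: linear algebra for the unwinding, the classical geometry of planar rigid motions for the Elekes--Sharir parametrization, and a short incidence count for the coplanarity lemma. The entire difficulty is the line-incidence theorem quoted in the final step — the heart of Guth--Katz. Its proof builds a polynomial of degree $\sim L^{1/2}$ vanishing at a maximal set of rich points, partitions $\mathcal{L}$ into lines lying on the zero set and lines transverse to it, bounds the transverse incidences by B\'ezout/Szemer\'edi--Trotter-type arguments, and controls the lines inside the zero set using the structure theory of ruled surfaces (the flecnode polynomial together with the Cayley--Salmon and Monge theorems) and the coplanarity hypothesis above. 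This is precisely the ingredient the present paper takes as a black box.
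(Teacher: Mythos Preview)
The paper does not prove this theorem at all: it is stated with attribution to Guth--Katz \cite{GK15} and used as a black box (the \texttt{proof} environment immediately following it in the paper is the proof of Theorem~\ref{Thm: PlaneCongSharp}, not of the quoted result). So there is nothing in the paper to compare your argument against.

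That said, your proposal is a faithful outline of the actual Guth--Katz proof. The Elekes--Sharir reduction to line incidences in $\mathbb{R}^3$, the verification that at most $O(N)$ lines of $\mathcal{L}$ lie in any plane or regulus, the invocation of the Guth--Katz $k$-rich point bound $O(L^{3/2}k^{-2}+Lk^{-1})$, and the dyadic summation are all correct and are exactly the steps in \cite{GK15}. You also correctly flag that the double sum $\sum_{t,\theta}|v_\theta(t)|^2$ is, as literally written, infinite (every $\theta\in O(2)$ contributes $\sum_t|v_\theta(t)|=|E|^2$ from the diagonal pairs), and that the intended object is $\sum_{t,\theta}|v_\theta(t)|\bigl(|v_\theta(t)|-1\bigr)$; the paper is simply following the standard abuse of notation in this area. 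Your ``Main obstacle'' paragraph accurately isolates where the substance lies: everything outside the polynomial-partitioning line-incidence theorem is soft, and that theorem is precisely what the present paper imports wholesale.
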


\begin{proof}
For $\mathcal{S}$ in $M_2(k)(E)$, let $\lambda(\mathcal{S})$ be the orbit of $\mathcal{S}$ under the $O(2)$ action. Then,
\begin{align*}
|E|^{2(k+1)} &= \left( \sum_{\mathcal{S} \in M_2(k)(E)}\lambda(C) \right)^2 \leq |M_2(k)(E)| \sum_{\mathcal{S} \in M}\lambda^2(\mathcal{S}) \\
					 &= \sum_{\tau \in \mathbb{R}^2} \sum_{\theta \in O(2)} v_\theta^{(k+1)}(\tau) \lesssim |E|^{(k+2)}\log(|E|)
\end{align*}
\end{proof}

\section{Proof of Theorem \ref{Thm: PlanarGraphRigidSharp}}

The proof of Theorem \ref{Thm: PlanarGraphRigidSharp} follows from the fact that there can only be a constant number, dependent only on the number of points $k$ and the dimension $d$, of congruences associated to a minimally infinitesimally rigid graph. To prove this result we will follow the outline of \cite{CIMP17}.

For a tuple $\textbf{x}$ in $\mathbb{R}^{d(k+1)}$ it is useful to define the following pre-image of $f_{G_{k+1,m}}(\textbf{x})$

\begin{equation}\label{Eqn: PreImFramework}
N_{\bf x} = \{\textbf{y} \in \mathbb{R}^{d(k+1)}: f_{G_{k+1,m}}(\textbf{y})=f_{G_{k+1,m}}(\textbf{x})\}.
\end{equation}

\begin{proposition}[Section 3.4, \cite{CIMP17}]\label{Prop: FiniteConnComp}
Suppose $G_{k+1,m}$ a minimally infinitesimally rigid graph, $\textbf{x}$ any tuple (regular is not necessary here), let $b_0(N_x/\sim)$ denote the number of connected components of $N_\textbf{x}$ under the congruence relation given by the $O(d)$ action. Then $b_0(N_\textbf{x} /\sim) \leq C_{k,d}$, for some number $C_{k,d}>0$, depending only on the dimension $d$ and the number of points $k$.
\end{proposition}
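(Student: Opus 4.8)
## Proof Proposal

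The plan is to bound the number of congruence classes of realizations by counting connected components of the real algebraic set $N_{\bf x}$ and then showing each component meets only finitely many $O(d)$-orbits — or, more precisely, that after quotienting by $O(d)$ the number of components is uniformly bounded. The central tool is a quantitative bound on the number of connected components of a real algebraic variety in terms of the degrees and number of the defining polynomials, namely the Oleĭnik–Petrovskiĭ–Thom–Milnor–Warren type estimate: a variety in $\mathbb{R}^N$ cut out by polynomials of degree $\le D$ has at most $D(2D-1)^{N-1}$ connected components (or a similar explicit bound). Since $F_{G_{k+1,m}}$ has components that are quadratic polynomials, the set $N_{\bf x}$ defined in (\ref{Eqn: PreImFramework}) — equivalently the zero set of $F_{G_{k+1,m}}({\bf y}) - F_{G_{k+1,m}}({\bf x})$ — is cut out by $m$ polynomials of degree $2$ in $d(k+1)$ variables, so $b_0(N_{\bf x})$ is bounded by a constant depending only on $k$ and $d$. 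Note $N_{\bf x}$ using $f$ versus $F$ coincide as sets, so we may work with the squared distances, which are polynomial.

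First I would record that $N_{\bf x}$ is a genuine real algebraic set of bounded complexity and invoke the semialgebraic component bound to get $b_0(N_{\bf x}) \le C'_{k,d}$. Second, I would pass to the quotient: the relation $\sim$ on $N_{\bf x}$ is given by the $O(d)$-action, and $O(d)$ has exactly two connected components, so the quotient map $N_{\bf x} \to N_{\bf x}/\!\sim$ is continuous and surjective with the property that the image of a connected set is connected; hence $b_0(N_{\bf x}/\!\sim) \le b_0(N_{\bf x}) \le C'_{k,d}$, and we set $C_{k,d} = C'_{k,d}$. That inequality direction is all that is claimed, so I would not need to worry about whether orbits can split a component into several classes — the quotient only decreases the number of components. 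Third, I should remark that although the proposition is stated for a minimally infinitesimally rigid graph, the component-counting argument per se does not use rigidity; rigidity is what makes this bound \emph{useful}, because on the generic locus infinitesimal rigidity forces the nearby realizations with the same $G$-distance to differ only by a rigid motion, so that $N_{\bf x}/\!\sim$ is actually $0$-dimensional there and the finitely many components are finitely many points — this is the content we will invoke later when converting to the counting estimate in Theorem \ref{Thm: PlanarGraphRigidSharp}. I would cite \cite{CIMP17} for the precise packaging.

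The main obstacle, and the reason the statement explicitly says ``regular is not necessary here,'' is handling non-regular (in particular non-generic) tuples ${\bf x}$: at such ${\bf x}$ the fiber $N_{\bf x}$ can be positive-dimensional and its components need not correspond to single congruence classes. The point is that the crude algebraic bound on $b_0(N_{\bf x})$ is insensitive to this — it holds for \emph{every} ${\bf x}$ because it depends only on the degree and number of the quadrics and the ambient dimension, never on the rank of $DF_{G_{k+1,m}}({\bf x})$. So the ``obstacle'' is really just the temptation to prove something stronger (finiteness of congruence classes realizing a given distance) that is false at critical tuples; the resolution is to prove exactly the stated component bound and nothing more, deferring the genericity considerations to the step where this proposition is applied. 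A secondary technical point is making sure the constant is uniform in ${\bf x}$ — but this is automatic since the defining polynomials $F_{G_{k+1,m}}({\bf y}) - c$ for varying constant vector $c$ form a family of fixed degree and number, and the component bound depends on $c$ only through these, hence not at all.
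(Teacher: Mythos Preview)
Your approach is correct, and in fact this is the standard argument. Note, however, that the paper itself does not supply a proof of this proposition: it is imported wholesale from \cite{CIMP17} (Section~3.4 there), so there is no in-paper argument to compare against. Your use of the Ole\u{\i}nik--Petrovski\u{\i}--Thom--Milnor bound is exactly the mechanism used in \cite{CIMP17}: $N_{\bf x}$ is the zero set of $m$ quadratic polynomials in $d(k+1)$ real variables, and the Milnor bound gives $b_0(N_{\bf x}) \le 2\cdot 3^{d(k+1)-1}$ (or a cognate expression), a quantity depending only on $k$ and $d$ and not on ${\bf x}$. Your observation that the quotient map $N_{\bf x}\to N_{\bf x}/\!\sim$ is continuous and surjective, hence cannot increase $b_0$, is the correct way to pass to congruence classes; the remark about $O(d)$ having two components is true but unnecessary for that direction of the inequality. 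Your diagnosis that the rigidity hypothesis is not actually used in the component count --- it only becomes relevant in the companion Proposition~\ref{Prop: ConnectedThenMotion}, where one needs each component to be a single rigid-motion orbit --- is also right and matches how \cite{CIMP17} separates the two statements.
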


\begin{proposition}[Proposition 4.11, \cite{CIMP17}] \label{Prop: ConnectedThenMotion}
Suppose $G_{k+1,m}$ a minimally infinitesimally rigid graph, $\textbf{x}$ any regular tuple of $\G$. If $\textbf{y}$ and $\textbf{z}$ are in the same connected component of $N_\textbf{x}$ then there is some $\theta$ in $ISO(\mathbb{R}^d)$ such that $\textbf{y}=\theta \textbf{z}$.
\end{proposition}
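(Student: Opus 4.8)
\textbf{Proof proposal for Theorem \ref{Thm: PlanarGraphRigidSharp}.}

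The plan is to combine Propositions \ref{Prop: FiniteConnComp} and \ref{Prop: ConnectedThenMotion} to bound the size of each individual $G_{k+1,m}$-distance class, and then to run a double-counting argument (the same averaging idea used in the proof of Theorem \ref{Thm: PlaneCongSharp}) to deduce the lower bound $n^k$ on the number of distance classes. First I would fix a generic tuple $\bm x$ realizing a given value $t = f_{G_{k+1,m}}(\bm x)$ of the distance function and consider the fiber $N_{\bm x}$ from \eqref{Eqn: PreImFramework}. By Proposition \ref{Prop: FiniteConnComp}, $N_{\bm x}$ breaks into at most $C_{k,d}$ connected components after quotienting by the $O(d)$ (in fact $ISO(\mathbb R^2)$) action; and by Proposition \ref{Prop: ConnectedThenMotion}, any two tuples in the same component are related by an element of $ISO(\mathbb R^2)$. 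Hence the number of $ISO(\mathbb R^2)$-congruence classes of tuples realizing a fixed distance value $t$ is at most $C_{k,d}$. In other words, the natural map $M_2(k)(\mathbb R^2) \to \Delta(G_{k+1,m}, \mathbb R^{2(k+1)})$ is at most $C_{k,d}$-to-one on the generic locus.

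Next I would transfer this to the discrete setting: for a finite $E \subset \mathbb R^2$ of size $n$, restricting to non-singular (generic) tuples in $E^{k+1}$, the map sending a congruence class $\mathcal S \in M_2(k)(E)$ to its $G_{k+1,m}$-distance is at most $C_{k,d}$-to-one. Therefore
\begin{equation*}
|\Delta(G_{k+1,m}, E^{k+1})| \gtrsim \frac{1}{C_{k,d}} \, |M_2(k)(E)|^{\mathrm{gen}},
\end{equation*}
where $|M_2(k)(E)|^{\mathrm{gen}}$ counts congruence classes of non-singular tuples. It then remains to observe that Theorem \ref{Thm: PlaneCongSharp} already gives $|M_2(k)(E)| \gtrapprox n^k$, and that the singular tuples (those whose first $d+1$ points are affinely dependent) contribute a genuinely lower-order count — at most $O(n^{k-1/2})$ or so by a trivial projection bound, so they do not affect the $n^k$ estimate up to the $\lessapprox$ tolerance. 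Combining the two displays yields $|\Delta(G_{k+1,m}, E^{k+1})| \gtrapprox n^k$, which is Theorem \ref{Thm: PlanarGraphRigidSharp}. The upper bound (sharpness) is handled separately by Theorem \ref{Thm: ConjIsSharp} via the lattice example $E = \mathbb Z^2 \cap [0,\sqrt n]^2$.

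The main obstacle I anticipate is the bookkeeping at the interface between the continuous and discrete pictures, specifically making sure the "$\leq C_{k,d}$-to-one" statement is applied to the right objects. Proposition \ref{Prop: ConnectedThenMotion} requires $\bm x$ to be a \emph{regular} tuple of $G_{k+1,m}$, so one must check that generic tuples are regular (this is Remark \ref{genericimpliesregular}) and that for a distance value $t$ realized by \emph{some} tuple, it is realized by a regular one, or else argue that distance values realized only by non-regular tuples are negligible in count. A second, more delicate point: the congruence classes in $M_2(k)(E)$ are $O(2)$-translation classes of ordered tuples, whereas $N_{\bm x}$ lives in all of $\mathbb R^{2(k+1)}$; one must be careful that the "component count $\le C_{k,d}$" genuinely bounds the number of $E$-congruence classes mapping to a fixed $t$, and not merely the number of abstract components — this is where the hypothesis that $G_{k+1,m}$ is \emph{minimally} infinitesimally rigid (so that $\mathrm{rank}\, DF_{G_{k+1,m}} = \mathrm{rank}\, DF_{K_{k+1}}$ generically, forcing the $G$-distance to determine all of $f_{K_{k+1}}$ up to finitely many choices) does the essential work, and I would want to state that equivalence cleanly before invoking the two cited propositions.
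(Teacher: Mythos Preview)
Your proposal is correct and arrives at the theorem by a genuinely different route from the paper. The paper does \emph{not} factor through Theorem~\ref{Thm: PlaneCongSharp}; instead it runs a fresh Cauchy--Schwarz argument directly on the $G$-distance fibers. Concretely, the paper sets $v(t)=\{\bm x\in E^{k+1}:f_{G_{k+1,m}}(\bm x)=t\}$, uses Proposition~\ref{Prop: FiniteConnComp} to split each $v(t)$ into at most $C_{k,d}$ pieces, picks the largest piece $\tilde v_0(t)$, and Cauchy--Schwarz gives $|E|^{2(k+1)}\le C_{k,d}^2\,|\Delta(G_{k+1,m},E^{k+1})|\sum_t\tilde v_0(t)^2$. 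Proposition~\ref{Prop: ConnectedThenMotion} then says pairs in $\tilde v_0(t)$ are related by a single rigid motion, so $\sum_t\tilde v_0(t)^2\le\sum_{\theta,\tau}v_\theta(\tau)^{k+1}$, and after pulling out $|E|^{k-1}$ one applies Guth--Katz to $\sum_{\theta,\tau}v_\theta(\tau)^2$. Your argument replaces this whole second half with the single observation that the (well-defined) map $M_2(k)(E)\to\Delta(G_{k+1,m},E^{k+1})$ has fibers of size at most $C_{k,d}$ on the regular locus, and then invokes the already-proved Theorem~\ref{Thm: PlaneCongSharp}. This is cleaner and makes the role of rigidity (``finite-to-one from congruence classes to $G$-distances'') more transparent; the paper's version is more self-contained but is essentially re-deriving Theorem~\ref{Thm: PlaneCongSharp} inside the proof.

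One remark on your flagged obstacles: the regularity issue you worry about is present in the paper's proof as well --- it applies Proposition~\ref{Prop: ConnectedThenMotion} without ever verifying that the relevant tuples are regular --- so you are not introducing a new gap. Your handling of the singular/non-regular contribution (bounding the number of non-generic congruence classes by a lower-order term) is the natural way to make this honest, and is in fact more careful than what the paper writes down.
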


We can combine the above two results to give us the Theorem.

Let us first define the following set

$$ v(t) = \{ \textbf{x} \in E^{k+1} : f_{\G}(\textbf{x}) = t\}.$$

Using Proposition \ref{Prop: FiniteConnComp} that we can divide $v(t)$ into a finite union of connected components $\tilde{v}_i$. Thus 
$$v(t) = \bigcup_{i=1}^{C_{k,d}} \tilde{v}_i(t),$$
where some $\tilde{v}_i(t)$ may be empty. Letting $\tilde{v}_0(t)$ be the largest of these connected components we have the following estimate.

\begin{align*}
|E|^{2(k+1)} &= \left( \sum_{t \in \Delta(\G)(E)} v(t) \right)^2 =  \left( \sum_{t \in \Delta} \sum_{i=1}^{C_{k,d}} \tilde{v}_i(t) \right)^2 \\
					&\leq C_{k,d}^2 \left( \sum_{t \in \Delta} \tilde{v}_0(t) \right)^2 \\
					&\leq C_{k,d}^2 |\Delta(\G)(E)| \sum \tilde{v}_0^2(t).
\end{align*}

Thus to prove the result it suffices to prove the following bound,

$$ \sum_t \tilde{v}_0^2(t) \lesssim |E|^{(k+2)}\log(|E|).$$

To do this we need to use Proposition \ref{Prop: ConnectedThenMotion}. Note that,

$$\sum_t \tilde{v}_0^2(t) = |\{ (\textbf{x},\textbf{y}) | f_{\G}(\textbf{x}) = f_{\G}(\textbf{y})~\&~ \textbf{x},\textbf{y} \text{ in same max. conn. comp. of } f^{-1}_{\G}(\Delta)\}|.$$

By Proposition \ref{Prop: ConnectedThenMotion} we have that $x$ and $y$ being in the same connected component of $f^{-1}_{\G}(\Delta(\G)(E))$ means there is a rigid motion $\theta$ such that $\textbf{x}=\theta \textbf{y}$. Recalling that these are frameworks we have that $(x^1, \ldots, x^{(k+1)}) = (\theta y^1, \ldots, \theta y^{(k+1)})$. Using that $f_{\G}(\textbf{x}) = f_{\G}(\theta\textbf{y})$ so then if $ij$ and edge in $\G$ we have $x^i-\theta y^i = x^j - \theta y^j = \tau$ where $\tau$ is uniform over the tuple pair $(\textbf{x}, \textbf{y})$.

So if we define

$$ v_\theta(\tau) = \{ (x,y) \in E^2 : x-\theta y = \tau\},$$

we have the following result,

$$ \sum_t \tilde{v}_0^2(t) \leq \sum_{\tau \in \mathbb{R}^d} \sum_{\theta \in ISO(\mathbb{R}^d)} v^{(k+1)}_\theta(\tau).$$

Here we don't necessarily have equality here as there may be elements counted in the right hand side that are outside the maximal connected component of  $f^{-1}_{\G}(\Delta(\G)(E))$. But certainly all pairs from the maximal connected component are counted\footnote{In fact the RHS counts all pairs from each connected component, but not cross pairs. However we have to reduce to one connected component to pass through the C-S step above.}. This bound suffices for our purposes and will in fact produce a sharp result. To conclude we note the following trivial bound,

$$ |v_\theta(\tau)| \leq |E|,$$

which follows from the fact that the second coordinate is entirely dependent on the choice of the first (once $\theta$ and $\tau$ are fixed).

Until this stage the calculation works in any dimension $d$, however to conclude we are going to apply the Guth-Katz result that lead to the resolution of the Erd\H os distance problem. This requires that we operate in dimension 2 only. When $d=2$ we have

\begin{align*}
\sum_t \tilde{v}_0^2(t) &\leq\sum_{\tau \in \mathbb{R}^2} \sum_{\theta \in ISO(\mathbb{R}^2)} v^{(k+1)}_\theta(\tau) \\
										&\leq |E|^{(k-1)} \sum_{\tau \in \mathbb{R}^2} \sum_{\theta \in ISO(\mathbb{R}^2)} v^{2}_\theta(\tau) \\
										&\lesssim |E|^{(k-1)} \cdot |E|^3 \log(|E|) = |E|^{(k+2)}\log(|E|).
\end{align*}

Where the final estimate deploys the Guth-Katz result. This was the bound we required, thus we have for a minimally infinitesimal graph $\G$ that
$$ |\Delta(\G)(E)| \gtrapprox |E|^k. $$

\section{Proof of Theorem \ref{Thm: GenGraphErdosPin}}

\noindent Recall we define the pinned distance set as
$$ \Delta_x(E) = \{ |x-y| : y \in E\}, $$
for a pin $x \in E$. We call $|\Delta_y(E)|$ the pin-richness of $x$ (in $E$) and a set $A$ a $r$-rich pin set if every point in $A$ has pin-richness at least $r$. Recall that the Erd\H os pinned distance conjecture states:

\begin{conjecture}
For a finite point set $E$ in $\mathbb{R}^d$ there is a point $y$ in $E$ such that $|\Delta_y(E)| \approx |E|^{\frac{2}{d}}$.
\end{conjecture}

The first part of our prove is to show that if the above conjecture hold then we have many rich pins. We can then use these rich pins as the vertices for our distance graphs, where their richness allows us to construct sufficiently many variations of graph-distance tuples.

\begin{lemma}\label{Lem: ManyRichPins}
Suppose the Erd\H os pinned-distance conjecture is satisfied for a point set $E$. Then there are $\sim |E|$ points $x$ in $E$ such that $|\Delta_{x}(E)| \approx |E|$.
\begin{proof}
To see this we use Erd\H os' pinned-distance conjecture to find a pin $x_0$ such that $|\Delta_{x_0}(E)| \approx |E|$. We then remove this point from $E$ to gain a modified $E_0$. We then apply the conjecture to $E_0$ to gain some $x_1$ which is a pin of richness $\approx |E|$. We repeat the process $\frac{|E|}{2}$ times gaining a sufficiently rich pin each time. Thus we have $|E|/2$ pins with pin richness between $|E|$ and $|E|/2$ as claimed. 
\end{proof}
\end{lemma}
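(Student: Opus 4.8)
The plan is to produce the rich pins \emph{greedily}: apply the pinned-distance conjecture to $E$ to extract one rich pin, delete it, apply the conjecture to what remains, delete that pin, and iterate. The key observation is that after fewer than $|E|/2$ deletions the ground set still has comparably many points, so each pin the conjecture hands us is rich relative to the current (shrunken) set and — since passing to a subset can only shrink a pinned distance set — it is automatically rich relative to the original set $E$ as well.

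Concretely, I would set $E_0 = E$ and, as long as $|E_j| \geq |E|/2$, apply the conjecture to the finite planar set $E_j$ to obtain $x_j \in E_j$ with $|\Delta_{x_j}(E_j)| \approx |E_j|$ (the pinned exponent $2/d$ equals $1$ in the plane), and then set $E_{j+1} = E_j \setminus \{x_j\}$. Running this for $j = 0, 1, \dots, \lfloor |E|/2 \rfloor - 1$ yields $\lfloor |E|/2 \rfloor$, i.e. $\sim |E|$, points $x_0, x_1, \dots$ that are pairwise distinct, since $x_j \notin E_{j+1}$ whereas $x_{j'} \in E_{j+1}$ for every $j' > j$. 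It remains to transfer richness from $E_j$ back to $E$: from $E_j \subseteq E$ we get $\Delta_{x_j}(E_j) \subseteq \Delta_{x_j}(E)$, hence $|\Delta_{x_j}(E)| \geq |\Delta_{x_j}(E_j)| \approx |E_j|$, and since $|E_j| = |E| - j \geq |E|/2$ throughout, $|\Delta_{x_j}(E)| \gtrapprox |E|$; combined with the trivial upper bound $|\Delta_{x_j}(E)| \leq |E| - 1$ this gives $|\Delta_{x_j}(E)| \approx |E|$ for each of these $\sim |E|$ pins, which is the assertion.

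The one delicate point — and it is only mildly so — is the $\epsilon$-bookkeeping hidden in the symbol $\approx$. At step $j$ the controlling parameter is $|E_j|$ rather than $|E|$, so the conjecture literally gives $|\Delta_{x_j}(E_j)| \gtrapprox |E_j|$ with the implied constant attached to $|E_j|$; but since $|E|/2 \leq |E_j| \leq |E|$ for every $j$ in range, $|E_j|^{\pm\epsilon}$ and $|E|^{\pm\epsilon}$ differ by at most $2^{\epsilon}$, which is absorbed into the same constant. The essential reason nothing goes wrong is that each of the $\sim |E|$ steps is a single, self-contained invocation of the hypothesis, not a nested estimate, so the $\epsilon$-losses never compound. (Replacing $1/2$ by any fixed $\delta \in (0,1)$ in the argument produces $(1-\delta)|E|$ rich pins at the cost of a worse constant, if that is ever wanted.)
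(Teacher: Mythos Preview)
Your proof is correct and is essentially the same greedy deletion argument as the paper's: apply the pinned-distance conjecture, remove the resulting rich pin, and iterate $|E|/2$ times so that the ambient set always has size at least $|E|/2$. If anything, you have been more careful than the paper in spelling out why the extracted pins are distinct, why richness in $E_j$ transfers up to richness in $E$, and why the $\epsilon$-losses in $\approx$ do not accumulate.
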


To finish the proof we count the number of possible distance drawing using the rich-pin subset of $E$. Notice that for any graph drawing once we have determined the position of the vertices we have no freedom left to select any other edges. Thus we naturally use spanning trees to determine the number of ways we have of drawing the graph. It is clear that for a graph on $k$ vertices that the number of edges in the spanning tree will be $k-1$. As we have that $k <<|E|$ (in particular $k<<\frac{|E|}{2}$) then we can choose our edges essentially independently from the set of rich distances.

Thus according to Lemma \ref{Lem: ManyRichPins} the total number choices for each edge in the spanning tree is $\approx |E|^{\frac{2}{d}}$. As we have $k-1$ such choices and our choices are independent, we have a total number of choices is

$$ \gtrapprox \left(|E|^{\frac{2}{d}} \right)^{(k-1)} = |E|^{\frac{2(k-1)}{d}}.$$

We note that this is clearly sharp as the grid in $\mathbb{R}^d$ satisfies the Erd\H os distance problem criterion, in that each point has $\sim |E|^{\frac{2}{d}}$ unique distances in its pinned distance set.

\vskip.25in

\bigskip


\begin{thebibliography}{8}

\bibitem{AP95} P. Agarwal, J. Pach. {\it Combinatorial Geometry}, John Wiley \& Sons Inc., 1995.

\bibitem{APST04} B. Aronov, J. Pach, M. Sharir, and G. Tardos. {\it Distinct distances in three and higher dimensions}, Combinatorics, Probability and Computing 13.3 (2004): 283-293.


\bibitem{CIMP17} N. Chatzikonstantinou, A. Iosevich, S. Mkrtchyan and J. Pakianathan. {\it Rigidity, Graphs and Hausdorff Dimension}, arXiv preprint arXiv:1708.05919 2017.

\bibitem{C84} F. Chung. {\it The number of different distances determined by n points in the plane}, Journal of Combinatorial Theory, Series A 36.3 (1984): 342-354.

\bibitem{CST92} F. Chung, E. Szemer\' edi, and W. Trotter. {\it The number of different distances determined by a set of points in the Euclidean plane}, Discrete and Computational Geometry 7.1 (1992): 1-11.

\bibitem{CEGSW90} K. L. Clarkson, H. Edelsbrunner, L. J. Guibas, M. Sharir, and E. Welzl, {\it Combinatorial complexity bounds for arrangements of curves and spheres}, Discrete Comput. Geom. \textbf{5} (1990), no. 2, 99-160.


\bibitem{E45} P. Erd\H os. {\it On sets of distances of n points} Amer. Math. Monthly. \textbf{53} (1946), 248--250.


\bibitem{GK15} L. Guth, N. H. Katz, {\it On the Erd\H os distinct distance problem in the plane}, Ann. of Math. (2) \textbf{181} (2015),
no. 1, 155-190.


%
%

\bibitem{KT04} N. Katz and G. Tardos, {\it A new entropy inequality for the Erd?s distance problem}, Towards a theory of geometric graphs, 119-126, 
Contemp. Math., \textbf{342}, Amer. Math. Soc., Providence, RI, (2004).


\bibitem{M52} L. Moser. {\it On the different distances determined by n points}, The American Mathematical Monthly 59.2 (1952): 85-91.

%

\bibitem{SST84} J. Spencer, E. Szemeredi and W. Trotter, {\it Unit distances in the Euclidean plane}, in Bollobás, Béla, Graph Theory and Combinatorics, London: Academic Press, pp. 293-308, (1984). 

\bibitem{ST01} J. Solymosi and C. D. T\' oth. {\it Distinct distances in the plane}, Discrete and Computational Geometry 25.4 (2001): 629-634.

\bibitem{SV04} J. Solymosi and V. Vu, {\it Distinct distances in high dimensional homogeneous sets}, {Towards a theory of Geometric Graphs}, (J. Pach, ed.), Contem. Math. {\bf 342}, Amer. Math. Soc., Providence,  (2004).

\bibitem{SV08} J. Solymosi and V. Vu, {\it Near optimal bounds for the Erd\H os distinct distances problem in high dimensions} Combinatorica \textbf{28} (2008), no. 1, 113-125. 

\bibitem{S97} L. Sz\' ekely {\it Crossing numbers and hard Erdos problems in discrete geometry}, Combinatorics, Probability and Computing 6.03 (1997): 353-358.

\bibitem{T03} G. Tardos. {\it On distinct sums and distinct distances}, Advances in Mathematics 180.1 (2003): 275-289.

\end{thebibliography}
\end{document}